\documentclass[]{article}
\usepackage[]{tikz}
\usepackage[]{pgfplots}
\usepackage[]{amsthm}
\usepackage[]{amsmath}
\usetikzlibrary{shapes}
\pgfplotsset{compat=newest}
\usepackage[texencoding=ascii, style=alphabetic, maxbibnames=99]{biblatex}
\usepackage[]{hyperref}
\usepackage[]{geometry}

\newtheorem{theorem}{Theorem}
\newtheorem{proposition}{Proposition}
\theoremstyle{definition}
\newtheorem{definition}{Definition}

\addbibresource{trib.bib}

\let\set\mathbf

\title{Creating decidable diophantine equations}
\author{Robert DOUGHERTY-BLISS, Charles KENNEY, and Doron ZEILBERGER}
\date{\today}

\begin{document}

\maketitle

\centerline{\emph{Dedicated to Yuri Matiyasevich, a modern day Diophantus.}}

\begin{abstract}
    In 1974, 23-year-old Yuri Matiyasevich shattered Hilbert's dream to find a
    universal algorithm that would input an arbitrary polynomial of several
    variables with integer coefficients and determine whether it has integer
    solutions. It used, in a very clever way, sequences that satisfy
    second-order linear recurrences with integer coefficients. A side effect of
    his proof was the construction of infinite families of Diophantine
    equations for which the existence of solutions \emph{is} decidable, the
    so-called Pell equations.

    This was extended to higher-order (third and fourth) recurrences in a deep
    study by Matiyasevich's student, Maxim Vsemirnov, using sophisticated
    algebraic number theory. In the present, mostly expository, article, we
    revisit some of Vsemirnov's results from a more elementary viewpoint, and
    supplement it with a Maple implementation that would enable anyone to
    actually construct many families of decidable diophantine equations. Our
    method also can construct such diophantine equations for which one can
    prove that no solutions exist.
\end{abstract}

\section{Preface}

A diophantine equation is an equation of the form $P(x_1, x_2, \dots, x_n) = 0$
where $P$ is an integer coefficient polynomial, and we restrict the variables
$x_k$ to be integers. The most famous diophantine equation of all time is
\begin{equation*}
    x^2-2y^2=0.
\end{equation*}
Hippasus of Metapontum discovered that this equation has no solution in the
positive integers \cite{fritz}. Though Hippasus could not have phrased his
argument completely rigorously, he might have enjoyed something like this: Any
solution $(x, y)$ satisfies $x > y > 0$, and it is not hard to check that $(2y
- x, x - y)$ is also a positive solution with a smaller second coordinate.
(Note that $2y - x > x - y$ because $x \geq (3/2) y$ is impossible.) This is a
contradiction, because it would imply arbitrarily small positive integers.

Another equation where this kind of argument applies is {\it Pell's equation}
\begin{equation*}
    x^2-2 y^2 = \pm 1.
\end{equation*}
If $(x, y)$ is a solution, then so is $(x+2y,x+y)$. A simple brute force search
reveals the solution $(1, 1)$, and then this argument yields $(3, 2)$, $(7,
5)$, $(17, 12)$, and so on. A little more effort will show that \emph{all}
positive solutions come from repeatedly applying this map to $(1, 1)$
\cite{zeilbergerMotivated, pell}.

The previous two examples covered cases where a diophantine equation had
\emph{no} solutions, and one where it had \emph{infinitely many}. But both had
some subtle technical details that make it hard to imagine generalizing them to
other equations. In the early 1900's, David Hilbert believed that there should
be an argument that \emph{does} generalize to all diophantine equations. The
tenth problem on his famous 1900 list was to find it. But in 1970, 23-year-old
Yuri Matiyasevich, standing on the shoulders of Julia Robinson, Martin Davis,
and Hilary Putnam, shocked the world of mathematics by showing that Hilbert's
dream is impossible. There is no algorithm to determine whether an integer
coefficient polynomial has integer roots
\cite{juliarobinson,matiyasevich1993,mat71}.

Of course, for {\it specfic} equations, or even specific infinite families, one
can often decide the question. For example, Sir Andrew Wiles proved that
\begin{equation*}
    x^n+y^n=z^n
\end{equation*}
has \emph{no} positive integer solutions for $n > 2$, and Noam Elkies proved
that
\begin{equation*}
    A^4+B^4+C^4=D^4
\end{equation*}
has \emph{infinitely many} solutions \cite{wiles1995,elkies1988}.

A key fact in Matiyasevich's proof is that the solutions to the diophantine
equation
\begin{equation}
    \label{example-eqn}
    x^2 - b xy + y^2 = 1,
\end{equation}
where $x$ and $y$ are variables and $b$ is an integer ``parameter,'' are
described by the second-order linear recurrence
\begin{align*}
    a_b(0) &= 0,\ a_b(1) = 1 \\
    a_b(n+2) &= b \cdot a_b(n+1)-a_b(n).
\end{align*}
This is precise in the following sense: integers $x > y$ satisfy
\eqref{example-eqn} if and only if $(x, y) = (a_b(n + 1), a_b(n))$ for some
integer $n$ (\cite[pp.19-20]{matiyasevich1993} and \cite{jones1991}). The first
step towards this result begins with the matrix identity
\begin{equation*}
    \begin{bmatrix}
        a_b(n + 1) & -a_b(n) \\
        a_b(n) & -a_b(n - 1)
    \end{bmatrix}
    =
    \begin{bmatrix}
        b & -1 \\
        1 & 0
    \end{bmatrix}^n,
\end{equation*}
which is easily established by induction. Taking the determinant of both sides
and applying $a_b(n)$'s defining recurrence shows that consecutive terms of
$a_b(n)$ satisfy \eqref{example-eqn}. The converse relies on some slightly
technical arguments.

From a high level, Matiyasevich showed that a certain family of second order
recurrences were ``equivalent'' to a certain family of diophantine equations.
This was enough to resolve Hilbert's tenth problem, but it suggests a follow up
question: What about higher order recurrences? This question was brilliantly
investigated by M.~A.~Vsemirnov, a student of Matiyasevich, in the late 1990's
\cite{vsemirnovI,vsemirnovII}. Roughly speaking, Vsemirnov proved that only
certain recurrences up to order four can be equivalent to a diophantine
equation, and he determined \emph{all} such third order recurrences.
Vsemirnov's techniques rely on sophisticated algebraic number theory. Here, we
will show an elementary, constructive approach which relies only on calculus
and our computers.

\section{Diophantine equations from recurrences}

Let us first describe the construction which takes us from recurrences to
diophantine equations. Consider the linear recurrence
\begin{equation}
    \label{recdef}
    a(n) = c_1 a(n - 1) + \cdots + c_d a(n - d).
\end{equation}
Our recipe has two parts. First, the ``companion matrix''
\begin{equation*}
    B
    =
    \begin{bmatrix}
        c_1 & c_2 & \cdots & c_d \\
        1   &   0 & \cdots & 0   \\
        0   &   1 & \cdots & 0 \\
            & & \cdots\\
        0   &   0 & \cdots \ 1 & 0
    \end{bmatrix}
\end{equation*}
satisfies the forward identity
\begin{equation*}
    \begin{bmatrix}
        a(n + 1) \\
        a(n) \\
        \vdots \\
        a(n - d + 2)
    \end{bmatrix}
    =
    B
    \begin{bmatrix}
        a(n) \\
        a(n - 1) \\
        \vdots \\
        a(n - d + 1)
    \end{bmatrix}
\end{equation*}
for any sequence which satisfies \eqref{recdef}. Second, \eqref{recdef} has
$d$ ``fundamental solutions'' which form a basis for all solutions
\cite[ch.~4]{concrete}. They are the solutions whose initial conditions (the
values $(a(0), a(1), \dots, a(d - 1))$) are all zero except for a single entry,
which is instead one. We will call these solutions $e_k(n)$, and give them the
following initial conditions:
\begin{equation*}
    \begin{bmatrix}
        1 & 0 & 0 & \cdots & 0 \\
        0 & 1 & 0 & \cdots & 0 \\
        0 & 0 & 1 & \cdots & 0 \\
          &   &   & \cdots &   \\
        0 & 0 & 0 & \cdots & 1 \\
    \end{bmatrix}
    =
    \begin{bmatrix}
        e_0(d - 1) & e_1(d - 1) & \cdots & e_{d - 1}(d - 1) \\
        e_0(d - 2) & e_1(d - 2) & \cdots & e_{d - 1}(d - 2) \\
        e_0(d - 3) & e_1(d - 3) & \cdots & e_{d - 1}(d - 3) \\
          &   \cdots &   \\
        e_0(0) & e_1(0) & \cdots & e_{d - 1}(0) \\
    \end{bmatrix}.
\end{equation*}
This implies the relation
\begin{equation}
    \label{matident}
    B^n = B^n I =
    \begin{bmatrix}
        e_0(n + d - 1) & e_1(n + d - 1) & \cdots & e_{d - 1}(n + d - 1) \\
        e_0(n + d - 2) & e_1(n + d - 2) & \cdots & e_{d - 1}(n + d - 2) \\
        e_0(n + d - 3) & e_1(n + d - 3) & \cdots & e_{d - 1}(n + d - 3) \\
          &   \cdots &   \\
        e_0(n) & e_1(n) & \cdots & e_{d - 1}(n) \\
    \end{bmatrix}.
\end{equation}
From the elementary theory of difference equations (again see
\cite[ch.~4]{concrete}), every solution to \eqref{recdef}---including the
fundamental ones---can be expressed as a linear combination of the sequences
$e_0(n)$, $e_0(n + 1)$, \dots, $e_0(n + d - 1)$. Therefore every entry in the
right-hand side of \eqref{matident} is actually a linear combination of shifts
of $e_0(n)$. Taking determinants in \eqref{matident} yields
\begin{equation*}
    P(e_0(n), e_0(n + 1), \dots, e_0(n + d - 1)) = (\det B)^n
\end{equation*}
for some polynomial $P$, and Laplace expansion on the first row gives $\det B =
(-1)^{d + 1} c_d$. If we take $c_d = (-1)^{d + 1}$ then the right-hand side is
$1$.

These considerations lead to the following proposition.

\begin{proposition}
    \label{existence}
    For any integers $c_1, c_2, \dots, c_{d - 1}$, not all zero, there is a
    nonzero polynomial $P(x_1, x_2, \dots, x_d)$ such that the diophantine
    equation
    \begin{equation*}
        P(x_1, x_2, \dots, x_d) = 1
    \end{equation*}
    has infinitely many solutions. In particular, the points $(a(n), a(n + 1),
    \dots, a(n + d - 1))$ are solutions, where $a(n)$ satisfies
    \begin{equation*}
        a(n) = \sum_{k = 1}^{d - 1} c_k a(n - k) + (-1)^{d + 1} a(n - d)
    \end{equation*}
    and has initial conditions $0, 0, \dots, 0, 1$.
\end{proposition}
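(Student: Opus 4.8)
The plan is to assemble the statement from the machinery already set up, the only real work being to certify that the polynomial produced is nonzero and that the solution set is infinite. Set $c_d = (-1)^{d+1}$ and let $a(n) = e_0(n)$ be the fundamental solution of \eqref{recdef} with initial conditions $0, 0, \dots, 0, 1$, exactly as in the statement. By \eqref{matident} the $(i,j)$ entry of $B^n$ is the solution sequence $e_j(n + d - 1 - i)$, and, as already noted, every solution of \eqref{recdef} is an $n$-independent linear combination of the shifts $e_0(n), e_0(n+1), \dots, e_0(n+d-1)$; hence each entry of $B^n$ is such a combination. Substituting these fixed linear forms for the entries of a generic $d \times d$ matrix in the determinant polynomial produces a polynomial $P(x_1, \dots, x_d)$ for which $B^n$, entrywise rewritten, is literally the matrix whose determinant $P$ computes, so
\begin{equation*}
    P(a(n), a(n+1), \dots, a(n+d-1)) = \det(B^n) = (\det B)^n = \bigl((-1)^{d+1}c_d\bigr)^n = 1
\end{equation*}
for every integer $n$. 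Taking $n = 0$ gives $P(0, 0, \dots, 0, 1) = \det(B^0) = 1 \ne 0$, so $P$ is not the zero polynomial, and every tuple $(a(n), a(n+1), \dots, a(n+d-1))$ solves $P = 1$.

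Two routine points must be pinned down. First, the substitution defining $P$ is legitimate only if the $d$ shifts $e_0(n), \dots, e_0(n+d-1)$ actually span the solution space of \eqref{recdef}, i.e. $e_0$ is annihilated by no shorter recurrence. This follows from its generating function
\begin{equation*}
    \sum_{n \ge 0} e_0(n)\, x^n = \frac{x^{d-1}}{1 - c_1 x - \dots - c_d x^d},
\end{equation*}
whose numerator and denominator are coprime since the denominator does not vanish at $x = 0$; thus the minimal recurrence of $e_0$ has characteristic polynomial $\lambda^d - c_1\lambda^{d-1} - \dots - c_d$ and order exactly $d$. Second, to get $P$ with integer coefficients one checks that these shifts even form an integer basis of the integer solutions: their Casoratian is constant because $\det B = 1$, and at $n = 0$ it is the determinant of the Hankel matrix $[e_0(i+j)]_{0 \le i,j \le d-1}$, which is $1$ on its antidiagonal and $0$ below it, hence $\pm 1$. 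Since the entries of $B^n$ are integer solutions, they are integer combinations of the shifts, so $P \in \mathbf{Z}[x_1, \dots, x_d]$.

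The remaining, and genuinely delicate, point is that $P = 1$ has \emph{infinitely many} solutions, for which it suffices that the tuples $(a(n), \dots, a(n+d-1))$ assume infinitely many values. Because $\det B = 1$, $B$ is invertible and $a$ extends to a two-sided solution; if the tuples took only finitely many values then the map $n \mapsto (a(n), \dots, a(n+d-1))$ from the integers to a finite set, being both forward- and backward-deterministic, would be purely periodic, forcing every root of the characteristic polynomial to be a root of unity. The hypothesis that $c_1, \dots, c_{d-1}$ are not all zero removes the most degenerate case $a(n) = (-1)^{d+1}a(n-d)$; and in the specializations that concern us (the sequences $a_b(n)$ for $|b| \ge 2$, and the Tribonacci recurrence) the characteristic polynomial has a root of modulus exceeding $1$, so $a(n)$ is unbounded and its values alone already yield infinitely many solutions. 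I expect this last step — excluding the cyclotomic, hence periodic, degenerations — to be the main obstacle to a completely uniform statement; everything before it is bookkeeping layered on the identity \eqref{matident}.
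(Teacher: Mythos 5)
Your construction is the same one the paper intends: the paper gives no formal proof environment for this proposition, and its ``proof'' is exactly the discussion that precedes it (the companion matrix $B$, the identity \eqref{matident}, rewriting each entry of $B^n$ as a fixed linear combination of shifts of $e_0$, and taking determinants after setting $c_d = (-1)^{d+1}$ so that $\det B = 1$). You supply two verifications the paper skips: that $P$ is not the zero polynomial (evaluate at $n = 0$), and that the linear forms expressing the other fundamental solutions through shifts of $e_0$ have \emph{integer} coefficients, via the unimodularity of the anti-triangular matrix $[e_0(i+j)]_{0 \le i, j \le d-1}$. Both checks are correct and genuinely needed if $P = 1$ is to be a diophantine equation with integer coefficients; your generating-function argument that the $d$ shifts of $e_0$ really do span the solution space is also right.

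The gap you flag at the end is real, and it is a gap in the paper's statement rather than in your argument. Since $\det B = 1$ and $(a(d-1), \dots, a(0)) = (1, 0, \dots, 0)$ is a cyclic vector for $B$, the tuples $(a(n), \dots, a(n+d-1))$ take only finitely many values exactly when $B$ has finite order, i.e.\ when every characteristic root is a root of unity, and the hypothesis that $c_1, \dots, c_{d-1}$ are not all zero does not exclude this. Concretely, for $d = 2$ and $c_1 = 1$ the recipe produces $P(x, y) = x^2 - xy + y^2$, a positive definite form, and $P = 1$ has exactly six integer solutions --- the period-six orbit of $(0, 1)$ under the recurrence $a(n) = a(n-1) - a(n-2)$. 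So ``infinitely many solutions'' is false as stated, and your proposed repair (assume a characteristic root of modulus greater than $1$, which holds in every case the paper actually uses, and conclude that $a(n)$ is unbounded) is the right one. A fully correct write-up should either add that hypothesis or weaken the conclusion to ``the recurrence tuples are solutions.''
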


This is roughly half of Matiyasevich's proof characterizing solutions to
\eqref{example-eqn}. The next step is to generalize the converse, and show that
the diophantine equations in Proposition~\ref{existence} are sometimes solved
by \emph{only} the recurrence solutions. This goal is too lofty in general, but
we have arguments which apply to an infinite family of recurrences, and one
detailed case study concerning the Tribonacci numbers.

\section{Tribonacci numbers}

\begin{definition}
    Define the numbers $T_n$ by
    \begin{align*}
        T_0 &= T_1 = 0 \\
        T_2 &= 1 \\
        T_n &= T_{n - 1} + T_{n - 2} + T_{n - 3},
    \end{align*}
    the polynomial $P_T$ by
    \begin{equation*}
        P_T(x, y, z) = x^3+2x^2y+x^2z+2xy^2-2xyz-xz^2+2y^3-2yz^2+z^3,
    \end{equation*}
    and the map $R_T$ by
    \begin{equation*}
        R_T(x, y, z) = (y, z, x + y + z).
    \end{equation*}
    Note that $P_T$ is invariant under $R_T$, i.e., $P_T \circ R_T = P_T$.
\end{definition}

\begin{proposition}
    \label{trib-reduction}
    If $P_T(x, y, z) = 1$ for integers $(x, y, z)$, then $(x, y, z)$ is the
    result of repeatedly applying $R_T$ or its inverse to a nonnegative
    increasing solution. That is, there exist integers $0 \leq a < b < c$ and a
    positive integer $n$ such that $P_T(a, b, c) = 1$ and $(x, y, z)$ is
    $R_T^n(a, b, c)$ or $R_T^{-n}(a, b, c)$.
\end{proposition}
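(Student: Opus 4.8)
The plan is to show that any integer solution $(x,y,z)$ with $P_T(x,y,z)=1$ can be driven, by repeated application of $R_T$ or $R_T^{-1}$, into the region $0 \le a < b < c$, and that this region contains at least one solution to land on. Since $P_T \circ R_T = P_T$, both $R_T$ and $R_T^{-1}$ preserve the solution set, so it suffices to understand the orbit structure of $R_T$ on integer solutions. The map $R_T(x,y,z) = (y,z,x+y+z)$ has inverse $R_T^{-1}(x,y,z) = (z - x - y,\, x,\, y)$; note both are integer maps, so orbits stay in $\mathbb Z^3$.

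First I would establish a descent/ascent dichotomy. Consider a solution $(x,y,z)$. Applying $R_T$ replaces the triple by $(y,z,x+y+z)$; applying $R_T^{-1}$ replaces it by $(z-x-y,x,y)$. I would argue that unless the triple is already "sorted" as $0 \le x < y < z$ (or is one of finitely many small exceptional triples), one of these two moves strictly decreases a suitable nonnegative potential function — the natural candidate being something like $\Phi(x,y,z) = x^2 + y^2 + z^2$ or $\max(|x|,|y|,|z|)$. The key inequality to check is that if the coordinates are badly ordered or some coordinate is negative, then either $|z - x - y| < \max(|x|,|y|,|z|)$ (so $R_T^{-1}$ shrinks the triple) or the triple already has the form where $R_T$ is the "shrinking" direction. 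Because $\Phi$ takes values in the nonnegative integers, descent must terminate, and the only way it can terminate is at a triple from which neither move decreases $\Phi$ — which, after checking the finitely many obstructions, must be a triple with $0 \le x < y < z$ (or one can be carried to such a triple by finitely many neutral moves). This is essentially the Fermat-style infinite-descent argument already rehearsed for $x^2 - 2y^2 = 0$ and Pell's equation in the preface, now run on a three-term linear map.

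The bookkeeping for the potential function is where I expect the main obstacle. The cubic form $P_T$ is not positive-definite, so its level set $P_T = 1$ is an unbounded surface, and I cannot simply say "finitely many solutions of bounded size"; I must show that the $R_T$-dynamics themselves force any solution toward the sorted cone. Concretely, I would split into cases by the sign pattern of $(x,y,z)$ and by which of $x,y,z$ is largest in absolute value, and in each case verify that the prescribed move ($R_T$ or $R_T^{-1}$) decreases $x^2+y^2+z^2$ strictly, except when $0 \le x < y < z$; the delicate cases are those with one large negative coordinate, where I would use the constraint $P_T(x,y,z)=1$ (not just the form of $R_T$) to rule out the triple growing under both moves. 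I would also need the boundary/degenerate cases $x = y$ or $y = z$: if $x = y$ then $R_T^{-1}$ gives $(z - 2x, x, x)$, and I would check directly that this still leads to descent or to a strictly sorted triple, possibly absorbing a handful of tiny triples ($(0,0,1)$ and its immediate neighbors) as explicit base cases. Once descent is pinned down, the statement follows: starting from $(x,y,z)$, apply moves until reaching a nonnegative increasing solution $(a,b,c)$; the sequence of moves, read backwards, expresses $(x,y,z)$ as $R_T^{\pm n}(a,b,c)$, and one checks the moves never need to switch direction (so a single $R_T^n$ or $R_T^{-n}$ suffices) because once a triple is sorted, $R_T$ keeps it sorted and increasing while $R_T^{-1}$ is the unique descent direction.
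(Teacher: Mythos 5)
Your proposal has a genuine gap: the descent dichotomy at its core is asserted, not proved, and it is exactly the hard part. The claim that every non-sorted solution admits a move strictly decreasing $x^2+y^2+z^2$ (up to finitely many exceptions) fails structurally. The linear map $R_T$ has one expanding eigendirection (eigenvalue $\alpha\approx 1.8393$) and a contracting complex pair, so along \emph{every} nonzero integer orbit both the forward and backward iterates eventually blow up; each orbit therefore has a $\Phi$-minimal point at which \emph{neither} move decreases $\Phi$, and nothing in your argument forces that point to be (or to lead by ``neutral moves'' into) the sorted cone. Already $(1,0,0)$ is a solution where both moves are non-decreasing. More seriously, the set of solutions where both moves fail to decrease $\Phi$ is cut out by homogeneous quadratic inequalities intersected with the unbounded surface $P_T=1$, so ``finitely many obstructions'' cannot be checked by inspection; establishing that every orbit actually reaches the nonnegative increasing cone is the whole content of the proposition. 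Your case analysis never engages the two facts that make this true: that $P_T(x,y,z)\le 0$ whenever $x,y,z\le 0$ (ruling out orbits that drift to $-\infty$ in the expanding direction), and that $\{1,\alpha,\alpha^2\}$ is linearly independent over $\set{Q}$ (ruling out integer solutions lying in the contracting eigenplane).

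The paper's proof goes forward rather than downward: iterating $R_T$ on $(x,y,z)$ yields a sequence $a(n)$ satisfying the Tribonacci recurrence, so $a(n)\sim c\,\alpha^n$ with an explicit $c$ linear in $(x,y,z)$ over $\set{Q}(\alpha)$. The case $c<0$ would force an eventually all-negative solution, contradicting $P_T\le 0$ on the nonpositive octant; the case $c=0$ contradicts the linear independence above; hence $c>0$ and the forward orbit eventually satisfies $0<a(n)<a(n+1)<a(n+2)$, from which the original triple is recovered by $R_T^{-1}$. A potential-function descent in the style you propose is essentially what the paper does later, in Proposition~\ref{trib-inc} --- but only after this reduction, on solutions already known to be positive and increasing, where the geometry is controlled. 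To repair your proof you would need to replace the local $\Phi$-comparison with a global statement about where orbits can accumulate their minima, which is most cleanly done via the asymptotics.
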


\begin{proof}
    Repeatedly applying $R_T$ to our initial point $(x, y, z)$ produces a
    sequence $a(n)$ which satisfies
    \begin{equation*}
        a(n) = a(n - 1) + a(n - 2) + a(n - 3)
    \end{equation*}
    with initial conditions $(a(0), a(1), a(2)) = (x, y, z)$. Because $P_T$ is
    invariant under $R_T$ we have $P_T(a(n), a(n + 1), a(n + 2)) = 1$ for all
    $n$. The elementary theory of difference equations implies $a(n) \sim
    c \cdot \alpha^n$ where $\alpha \approx 1.8393$ is the unique real root of $X^3 - X^2 - X
    - 1$ and
    \begin{equation*}
        c = \alpha \frac{(\alpha^2 - \alpha - 1) a(0) + (\alpha - 1) a(1) + a(2)}{\alpha^2 + 2 \alpha + 3}.
    \end{equation*}

    Note that $c$ is real. If $c < 0$, then we eventually obtain a strictly
    negative solution, which is impossible because $P_T(x, y, z) \leq 0$ if $x,
    y, z \leq 0$. If $c = 0$ then $(\alpha^2 - \alpha - 1) a(0) + (\alpha - 1)
    a(1) + a(2) = 0$, and this is impossible because $\{1, \alpha, \alpha^2\}$
    is linearly independent over the rationals. (The minimal polynomial is a
    cubic and irreducible over $\set{Q}$, so $\alpha$ is a degree three
    algebraic integer.) The remaining possibility is $c
    > 0$, which implies that we eventually have $0 < a(n) < a(n + 1) < a(n +
    2)$. We get back to $(x, y, z)$ by repeatedly applying the inverse map
    $R_T^{-1}$.
\end{proof}

\begin{proposition}
    \label{trib-inc}
    If $P_T(x, y, z) = 1$ for integers $0 < x < y < z$, then $(x, y, z) = (T_n,
    T_{n + 1}, T_{n + 2})$ for some integer $n \geq 0$.
\end{proposition}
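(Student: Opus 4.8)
My plan is to argue by descent. Given a solution $0 < x < y < z$ with $P_T(x, y, z) = 1$, I want to show that applying $R_T^{-1}$ either lands on a "base case" of the form $(T_n, T_{n+1}, T_{n+2})$ or produces a strictly smaller positive increasing solution, so that iterating must terminate at such a base case. First I would work out the inverse map explicitly: since $R_T(x, y, z) = (y, z, x + y + z)$, its inverse is $R_T^{-1}(x, y, z) = (z - x - y, x, y)$. So from $(x, y, z)$ we pass to $(z - x - y, x, y)$, and since $P_T$ is $R_T$-invariant this new triple also satisfies $P_T = 1$. The key inequality to establish is that when $0 < x < y < z$ and the triple is not already a shifted Tribonacci triple, the new first coordinate $z - x - y$ is nonnegative and the new triple $(z - x - y, x, y)$ is again (weakly) increasing with strictly smaller entries — specifically, I'd track the sum $x + y + z$, which strictly decreases under $R_T^{-1}$ as long as $z > 0$ and the triple isn't degenerate.

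The main work is to pin down exactly what happens at the bottom of the descent. Two things can go wrong: the new leading entry $z - x - y$ could be negative, or the new triple could fail to be strictly increasing (e.g., $z - x - y \geq x$ might fail, or we might hit $z - x - y = 0$, or a repeated value). I would show these boundary situations are exactly the Tribonacci base cases. Concretely: if $z - x - y < 0$, I'd combine this with $P_T(x,y,z) = 1$ and $0 < x < y < z$ to derive a contradiction — this is the analogue of the sign argument in Proposition~\ref{trib-reduction}, but now quantitative, using the explicit form of $P_T$ to bound it away from $1$ on that region, or to show the only integer points there violate the ordering. If $z - x - y = 0$, then $(0, x, y)$ satisfies $P_T = 1$; plugging $x_1 = 0$ into $P_T$ gives $P_T(0, y, z) = 2y^3 - 2yz^2 + z^3$ (wait — careful: with first argument $0$, $P_T(0, x, y) = 2x^3 - 2xy^2 + y^3$), and I'd solve $2x^3 - 2xy^2 + y^3 = 1$ over positive integers $0 < x < y$, expecting $(x,y) = (0,1)$ as essentially the only solution, which corresponds to $(T_1, T_2, T_3) = (0, 1, 1)$ — so I may need to allow the base case to include $x = 0$, consistent with the proposition's conclusion starting at $n = 0$ where $(T_0, T_1, T_2) = (0,0,1)$.

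The hard part will be this base-case analysis: showing that the finitely many "irreducible" increasing solutions are precisely the small Tribonacci triples, which requires actually solving a couple of low-degree Diophantine equations in two variables (the restrictions of $P_T$ to the boundary faces) and checking no spurious solutions appear. I expect these to be genuinely finite — e.g. $P_T$ restricted to $z = x + y$ or to one coordinate vanishing becomes a binary form of degree $3$ equal to $1$, which by the theory of Thue equations has finitely many solutions — but I would try to avoid invoking Thue's theorem and instead get the bound by hand using the ordering constraints $0 \le x < y < z$, which sharply limit the size of any solution once one coordinate is small. Once the descent terminates only at $\{(0,0,1), (0,1,1), (1,1,2), \dots\}$ — all of which are $(T_n, T_{n+1}, T_{n+2})$ — and since $R_T$ sends such a triple to the next one, reversing the descent shows the original $(x,y,z)$ was itself a shifted Tribonacci triple, completing the proof.
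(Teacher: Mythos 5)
Your skeleton coincides with the paper's: descend via $R_T^{-1}(x,y,z)=(z-x-y,x,y)$, which preserves $P_T=1$ and shrinks positive increasing triples, until a small base case is reached, then enumerate. The genuine gap is the step you describe as ``bound $P_T$ away from $1$ on that region, or show the only integer points there violate the ordering'': you never supply a mechanism that does this, and it is the entire content of the proposition. The paper's mechanism is to dehomogenize --- divide $P_T(x,y,z)=1$ by $z^3$ and set $(t,s)=(x/z,y/z)$, turning the equation into $f(s,t)=1/z^3$ for a fixed cubic $f$ on the unit square --- and then to compute (with computer assistance) that $f \geq (398-68\sqrt{34})/27 \approx 0.056 > 0$ on the region $1-t-s\le 0$, with a similar positive lower bound on the region $1-t-s\ge t$. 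This forces $z\le 2$ (resp.\ $z\le 4$) whenever the descent would fail. Since a strictly increasing positive integer solution automatically has $z\ge 3$, the failure $z-x-y\le 0$ never occurs at all, and the failure $z-x-y\ge x$ leaves only the finitely many triples $0<x<y<z<5$ to check by hand (only $(1,2,4)$ satisfies $P_T=1$, and it descends to $(0,0,1)$). Without some such compactification-plus-minimization, you have no a priori bound on $z$ for a ``bad'' solution, so neither of your hoped-for contradictions goes through and your base case is not finite.

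A second, related problem is that your base-case analysis looks at the wrong sets. The descent does not terminate on the boundary \emph{faces} $z=x+y$ or $z=2x+y$ (where restricting $P_T$ does yield a binary cubic, i.e.\ a Thue equation); it terminates anywhere in the full regions $z-x-y\le 0$ and $z-x-y\ge x$, since the integer $z-x-y$ can jump past a boundary value without landing on it. So solving the face equations, even completely, would not classify the terminal solutions. Moreover, even on a face, an equation such as $2y^3-2yz^2+z^3=1$ is not finished off by the ordering constraints alone: those constraints do not bound $z$ unless you already have the minimization bound, at which point the Thue equations are unnecessary. I would replace the face analysis with the paper's route: prove $0<z-x-y<x$ for every increasing solution with $z\ge 5$, then brute-force $z\le 4$.
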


\begin{proof}
    The map $R_T^{-1}(x, y, z) = (z - x - y, x, y)$ takes solutions to other
    solutions. Note that if $0 < z - x - y < x$, then the new solution is also
    positive and increasing, and in fact strictly smaller in magnitude. We will
    show that $0 < z - x - y < x$ for all increasing solutions with
    sufficiently large $z$.

    If we divide both sides of the equation $P_T(x, y, z) = 1$ by $z^3$, and
    make the change of variables $(t, s) = (x / z, y / z)$, then we obtain
    \begin{equation}
        \label{trib-cubic}
        2s^3+2s^2t+2st^2+t^3-2st+t^2-2s-t+1 = \frac{1}{z^3}.
    \end{equation}
    Call the left-hand side of this equation $f(s, t)$ and note that it is a
    cubic defined on the unit square. It is a routine (computer-assisted)
    calculus exercise to show that the minimum of $f(s, t)$ on the region $1 -
    t - s \leq 0$ is
    \begin{equation*}
        \frac{398 - 68 \sqrt{34}}{27} > 0.
    \end{equation*}
    Therefore we cannot have both \eqref{trib-cubic} and $1 - t - s \leq 0$ for
    \begin{equation*}
        z > \left( \frac{398 - 68 \sqrt{34}}{27} \right)^{-1/3} \approx 2.6235.
    \end{equation*}
    It follows that $0 < z - x - y$ for all increasing solutions to $P_T(x,
    y, z) = 1$ with $z \geq 3$. By an analogous argument on the region $1 - t -
    s \geq t$, all increasing solutions to $P_T(x, y, z) = 1$ with $z \geq 5$
    satisfy $z - x - y < x$.

    Repeatedly applying the ``backwards'' map $R_T^{-1}$ produces smaller,
    positive, increasing solutions as long as $z \geq 5$, and so this
    process terminates at a solution with $0 < x < y < z < 5$. It is
    simple to check that all such solutions return to the point $(0, 0, 1)$
    under the map $R_T^{-1}$, and so \emph{all} increasing positive
    solutions come from applying the ``forward'' map $R_T$ to $(0, 0, 1)$. This
    produces exactly the Tribonacci numbers.
\end{proof}

See Figure~\ref{plane-map} for a visual representation of the maps and regions
in Proposition~\ref{trib-inc}.

\begin{theorem}
    \label{trib-all}
    If $P_T(x, y, z) = 1$ for integers $x, y, z$, then $(x, y, z) = (T_n, T_{n
    + 1}, T_{n + 2})$ for some integer $n$.
\end{theorem}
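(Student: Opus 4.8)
The plan is to combine the two preceding propositions, which do essentially all of the work. Proposition~\ref{trib-reduction} tells us that any integer solution $(x,y,z)$ to $P_T(x,y,z)=1$ is obtained from some nonnegative increasing solution $(a,b,c)$ with $0 \le a < b < c$ by applying $R_T$ or $R_T^{-1}$ finitely many times. Proposition~\ref{trib-inc} then classifies the \emph{strictly} increasing positive solutions as consecutive Tribonacci triples $(T_n, T_{n+1}, T_{n+2})$. So the only gap to close is the boundary case: Proposition~\ref{trib-reduction} allows $a = 0$ or $a = b$, whereas Proposition~\ref{trib-inc} requires $0 < a < b < c$. I would handle this by checking directly that any nonnegative increasing solution with $a = 0$ or with equalities among the coordinates is already a Tribonacci triple (or reduces to one).

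Concretely, first I would dispose of the degenerate increasing solutions. If $a = b$ or $b = c$ the hypothesis $0 \le a < b < c$ fails, so we really only need $0 \le a < b < c$ with possibly $a = 0$. Suppose $a = 0$: then $P_T(0, b, c) = 2b^3 - 2bc^2 + c^3 = 1$. One checks (a short argument: for $c > b > 0$ one has $c^3 - 2bc^2 + 2b^3 = c^2(c - 2b) + 2b^3$, and a little estimation pins the solutions down, or one simply invokes the same $R_T^{-1}$-descent as in Proposition~\ref{trib-inc} since $(0,b,c)$ with $b < c$ still lies in the relevant region for $c \ge 5$) that the only solutions are $(0,0,1)$ and $(0,1,1)$, i.e. $(T_1, T_2, T_3) = (0,1,1)$ and the initial seed, which itself is $(T_0, T_1, T_2) = (0,0,1)$. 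Hence every nonnegative increasing solution, degenerate or not, is a consecutive Tribonacci triple $(T_n, T_{n+1}, T_{n+2})$ for some $n \ge 0$.

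Now combine: given an arbitrary integer solution $(x,y,z)$, Proposition~\ref{trib-reduction} yields $(x,y,z) = R_T^{\pm n}(T_m, T_{m+1}, T_{m+2})$ for some $m \ge 0$ and some integer $n$ (taking $n = 0$ if no iteration is needed). Since $R_T(T_m, T_{m+1}, T_{m+2}) = (T_{m+1}, T_{m+2}, T_m + T_{m+1} + T_{m+2}) = (T_{m+1}, T_{m+2}, T_{m+3})$ by the Tribonacci recurrence, applying $R_T$ repeatedly just walks forward along the Tribonacci sequence, and applying $R_T^{-1}$ walks backward, extending the Tribonacci sequence to negative indices via $T_{n} = T_{n+3} - T_{n+2} - T_{n+1}$. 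Either way $(x,y,z) = (T_n, T_{n+1}, T_{n+2})$ for some integer $n$ (now possibly negative), which is exactly the claim.

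The main obstacle is really just bookkeeping rather than a deep step: one must make sure the ``boundary'' increasing solutions with $a = 0$ are genuinely covered, and one must be careful about index conventions so that ``$(x,y,z) = (T_n, T_{n+1}, T_{n+2})$ for some integer $n$'' is interpreted with the Tribonacci recurrence run in both directions (the theorem statement already allows $n$ to be any integer, not just $n \ge 0$, so this is consistent). No new analytic estimates beyond those in Proposition~\ref{trib-inc} should be needed; the $a=0$ case, if one does not want to recycle the descent, is a finite check once one bounds $c$.
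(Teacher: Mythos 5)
Your proposal is correct and follows essentially the same route as the paper, which simply combines Propositions~\ref{trib-reduction} and~\ref{trib-inc} and observes that iterating $R_T$ and $R_T^{-1}$ from a Tribonacci triple walks the Tribonacci sequence forward and backward (to negative indices). Your extra care with the boundary case $a=0$ (ruling out solutions with $0 = a < b < c$ by the same descent or a finite check) patches a mismatch between the two propositions' hypotheses that the paper's one-line proof glosses over, but it is a refinement of, not a departure from, the paper's argument.
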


\begin{proof}
    By the previous two propositions, every solution comes from applying the
    maps $(x, y, z) \mapsto (y, z, x + y + z)$ and $(x, y, z) \mapsto (z - x -
    y, x, y)$ to the solution $(0, 0, 1)$, which produces exactly the
    Tribonacci numbers with positive and negative indices.
\end{proof}

\begin{figure}[htpb]
    \begin{minipage}{0.5\textwidth}
        \begin{tikzpicture}[]
            \begin{axis}[
                xmin=0, xmax=1,
                ymin=0, ymax=1,
                zmin=0, zmax=1,
                view={0}{90},
                axis equal image]
            \draw(0,0)--(1, 0)--(1,1)--(0,1)--(0,0);
            \draw[fill=gray!42](0,1)--(1,0)--(1,1)--(0,1);
            \draw[fill=gray!42](0,1)--(0.5,0)--(0,0)--(0,1);
            \filldraw (0.2956, 0.5437) circle (2pt);
            \addplot3[black!90, quiver={u=y/(x+y+1)-x, v=1/(x+y+1)-y, scale arrows=0.15}, samples=40, -latex] (x, y, 0);
            \end{axis}
        \end{tikzpicture}
    \end{minipage}
    \begin{minipage}{0.5\textwidth}
        \begin{tikzpicture}[]
            \begin{axis}[
                xmin=-3, xmax=3,
                ymin=-3, ymax=3,
                view={0}{90},
                axis equal image]
                \draw [thick] (0, 0)--(1, 0)--(1, 1)--(0, 1)--(0, 0);
                \draw [dashed, thick] (-3, 2)--(2,-3);
            \filldraw (0.2956, 0.5437) circle (3pt);
            \addplot3[
                black!90,
                quiver={u=y/(x+y+1)-x, v=1/(x+y+1)-y, scale arrows=0.10},
                samples=27,
                x filter/.expression={abs(x + y + 1) < 0.4 ? nan : x},
                -latex] (x, y, 0);
            \end{axis}
        \end{tikzpicture}
    \end{minipage}
    \caption[Tribonacci iteration map transformed to the plane]{The map $(x, y, z) \mapsto (y, z, x + y + z)$ represented in the $ts$
    plane by its equivalent $(t, s) \mapsto (s / (s + t + 1), 1 / (s + t +
    1))$. Left: The map restricted to the unit square, with the region $\{s + t
    > 1\} \cup \{s + 2t < 1\}$ shaded. The unique critical point in the first
    quadrant of the left-hand side of \eqref{trib-cubic} is labeled by a black
    dot. Right: The map on a larger portion of the plane. The critical point is
    an attractor for $s + t + 1 > 0$.}
    \label{plane-map}
\end{figure}
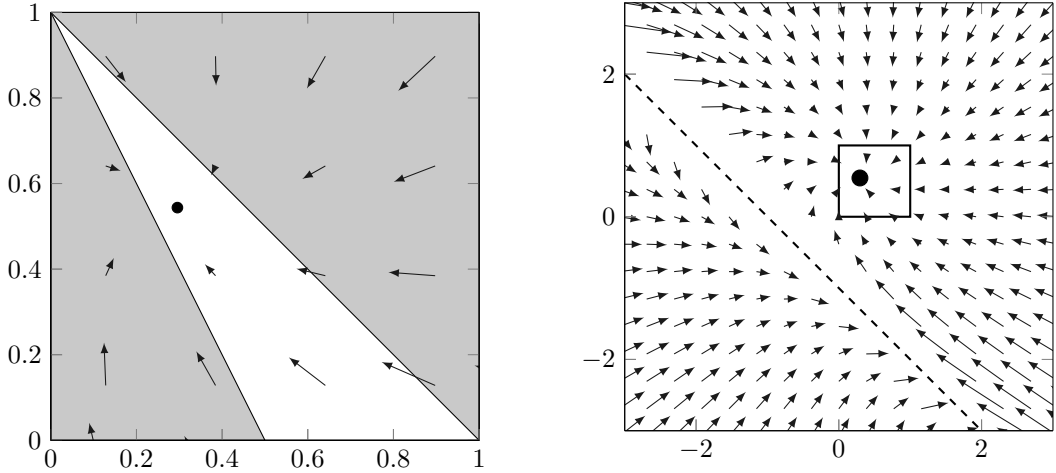

\section{Uniqueness in general}

The arguments from the previous section carry over almost verbatim to the
general third-order recurrence. The main difficulty is in establishing the
minimum of the analogous cubic \eqref{trib-cubic}. For any specific recurrence
it is completely routine to check whether the proof of
Proposition~\ref{trib-inc} works, but Proposition~\ref{gen-inc} gives a weaker
statement about an infinite family.

\begin{definition}
    For any positive integers $a$ and $b$, define the polynomial $P_{ab}(x, y,
    z)$ as
    \begin{equation*}
        a^2y^2z+abxyz+aby^3+b^2xy^2+ax^2z+axy^2-2ayz^2+2bx^2y-bxz^2-by^2z+x^3-3xyz+y^3+z^3
    \end{equation*}
\end{definition}

\begin{proposition}
    \label{gen-reduction}
    Let $a$ and $b$ be positive integers such that $X^3 - a X^2 - bX - 1$ is
    irreducible over $\set{Q}$ and has a single largest root which is real and
    greater than $1$. Then all integer solutions to $P_{ab}(x, y, z) = 1$ are
    generated by applying the map $(x, y, z) \mapsto (z - ay - bx, x, y)$ or
    its inverse to a positive, increasing solution.
\end{proposition}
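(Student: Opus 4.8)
The plan is to transcribe the proof of Proposition~\ref{trib-reduction} almost \emph{verbatim}, with the Tribonacci recurrence replaced by the one with characteristic polynomial $X^3 - aX^2 - bX - 1$ (so $d = 3$ and $(c_1, c_2, c_3) = (a, b, 1)$ in the notation of Proposition~\ref{existence}, whence $\det B = 1$ and the right-hand side of interest is $1$), and with the two properties of the dominant root that the Tribonacci argument enjoyed for free now furnished as hypotheses. Write $S(x, y, z) = (y, z, x + by + az)$ for the forward map; it is the inverse of the map appearing in the statement. As in the Tribonacci case, $P_{ab}$ is invariant under $S$, which one checks directly from the explicit formula. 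Given integers $(x, y, z)$ with $P_{ab}(x, y, z) = 1$, iterating $S$ produces a sequence $a(n)$ satisfying $a(n) = a\,a(n - 1) + b\,a(n - 2) + a(n - 3)$ with $(a(0), a(1), a(2)) = (x, y, z)$, and invariance gives $P_{ab}(a(n), a(n + 1), a(n + 2)) = 1$ for all $n \geq 0$.

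Next I would run the same asymptotic analysis. Irreducibility of $X^3 - aX^2 - bX - 1$ over $\set{Q}$ makes the three roots simple, and by hypothesis the largest, $\alpha$, is real, exceeds $1$, and strictly dominates the moduli of the other two; hence $a(n) \sim c\,\alpha^n$, where $c = \lim_{n \to \infty} a(n)/\alpha^n$ is real and, by Vandermonde inversion in the roots, is a $\set{Q}(\alpha)$-linear combination of $a(0), a(1), a(2)$. Now split into the three cases of Proposition~\ref{trib-reduction}. If $c < 0$, then $a(n) \to -\infty$, so $a(n), a(n + 1), a(n + 2)$ are eventually all negative, contradicting $P_{ab}(a(n), a(n + 1), a(n + 2)) = 1 > 0$ --- provided one knows that $P_{ab}(x, y, z) \leq 0$ whenever $x, y, z \leq 0$ (equivalently, since $P_{ab}$ is homogeneous of degree $3$, that $P_{ab} \geq 0$ on the nonnegative orthant). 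If $c = 0$, then the coefficient of $\alpha^n$ in $a(n)$ vanishes; by Cramer's rule, and after cancelling nonzero factors, this is the condition $\ell_0 a(0) + \ell_1 a(1) + \ell_2 a(2) = 0$, where Vieta's relations $\alpha + \beta + \gamma = a$ and $\alpha\beta\gamma = 1$ (hence $\alpha^{-1} = \alpha^2 - a\alpha - b$) for the roots $\alpha, \beta, \gamma$ give, up to a common nonzero factor, $\ell_2 = 1$, $\ell_1 = \alpha - a$, $\ell_0 = \alpha^2 - a\alpha - b$ --- which reduces to $(\alpha^2 - \alpha - 1,\ \alpha - 1,\ 1)$ when $a = b = 1$, matching Proposition~\ref{trib-reduction}. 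These $\ell_i$ are $\set{Q}$-linearly independent, since their coordinate matrix in the basis $\{1, \alpha, \alpha^2\}$ is triangular with unit diagonal and $\{1, \alpha, \alpha^2\}$ is itself $\set{Q}$-independent ($\alpha$ has degree $3$). So $a(0) = a(1) = a(2) = 0$, contradicting $P_{ab}(0, 0, 0) = 0 \neq 1$.

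The remaining case is $c > 0$: then $a(n) \to +\infty$ and $a(n + 1)/a(n) \to \alpha > 1$, so for some $N$ we have $0 < a(N) < a(N + 1) < a(N + 2)$, and applying the backward map $(x, y, z) \mapsto (z - ay - bx, x, y)$ to this positive increasing solution $N$ times returns $(x, y, z)$; this is exactly the claimed description. Everything above is formal linear-recurrence bookkeeping once the two hypotheses on the dominant root are granted, so the main obstacle is the sign statement used to dispatch $c < 0$: that $P_{ab}$ is nonnegative on the nonnegative orthant (equivalently nonpositive on the nonpositive orthant). For any fixed admissible $(a, b)$ this is the same ``routine, possibly computer-assisted'' check as the line ``$P_T(x, y, z) \leq 0$ if $x, y, z \leq 0$'' in the Tribonacci case, and in fact one only needs it on the cone $0 < x < y < z$ (the negated eventual window); but establishing it uniformly in $(a, b)$ is mildly delicate. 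The large-$(a, b)$ regime is immediate, since the monomials with coefficients $a^2, b^2, ab, \dots$ dominate the finitely many monomials with negative coefficients, leaving only small pairs to inspect --- or one hunts for a uniform sum-of-squares certificate.
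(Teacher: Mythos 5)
Your proposal follows the paper's proof exactly in structure: the paper's own argument for this proposition is a two-sentence deferral to Proposition~\ref{trib-reduction}, and what you have written is a faithful, fully detailed transcription of that argument with the Tribonacci-specific constants replaced by $(a,b,1)$. Your handling of the $c=0$ case (the coefficients $\ell_0 = \alpha^2 - a\alpha - b$, $\ell_1 = \alpha - a$, $\ell_2 = 1$, their $\set{Q}$-independence via the triangular change of basis from $\{1,\alpha,\alpha^2\}$) and of the $c>0$ case is correct and, if anything, more complete than the paper's.

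The one step you flag as delicate --- that $P_{ab}(x,y,z)\le 0$ whenever $x,y,z\le 0$, equivalently $P_{ab}\ge 0$ on the nonnegative orthant, which is what kills the case $c<0$ --- is not merely delicate: it is \emph{false} for some $(a,b)$ satisfying the stated hypotheses. Take $(a,b)=(1,5)$. The cubic $X^3 - X^2 - 5X - 1$ has no rational root, hence is irreducible over $\set{Q}$, and its three roots are all real with a single largest one near $2.88 > 1$; yet
\begin{equation*}
    P_{1,5}(0,1,2) = 2 + 5 - 8 - 10 + 1 + 8 = -2 < 0.
\end{equation*}
So the sign condition cannot be checked ``uniformly in $(a,b)$'' under the proposition's hypotheses alone, and your suggested large-$a$ regime, while plausible, is exactly where an extra hypothesis (like the ``$a$ sufficiently large relative to $b$'' condition already present in Proposition~\ref{gen-inc} and Theorem~\ref{gen-soln}) would have to enter. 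To be clear, this gap is inherited from, and hidden by, the paper's own one-line proof --- the authors never verify the analogue of ``$P_T(x,y,z)\le 0$ if $x,y,z\le 0$'' in the general setting --- so your writeup is no weaker than theirs, and you deserve credit for isolating precisely the step that does not transfer. But you should not present the sign condition as a routine check at this level of generality; either add it as a hypothesis, verify it for the specific $(a,b)$ at hand, or find a different way to exclude solutions whose orbit tends to $-\infty$.
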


\begin{proof}
    The argument is the same as in Proposition~\ref{trib-reduction}. The
    irreducibility of $X^3 - a X^2 - bX - 1$, with largest root $\alpha > 1$,
    implies the linear independence of $\{1, \alpha, \alpha^2\}$ over the
    rationals and gives the correct asymptotics.
\end{proof}

\begin{proposition}
    \label{gen-inc}
    Fix positive integers $a$ and $b$ and consider the recurrence
    \begin{equation}
        \label{3-rec}
        u(n) = a u(n - 1) + b u(n - 2) + u(n - 3).
    \end{equation}
    If $a$ is sufficiently large relative to $b$, then all solutions $0 < x < y
    < z$ to the diophantine equation $P_{ab}(x, y, z) = 1$ are generated by
    applying \eqref{3-rec} to finitely many solutions.
\end{proposition}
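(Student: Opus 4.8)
The plan is to run the proof of Proposition~\ref{trib-inc} essentially verbatim, tracking how the Tribonacci-specific constants get replaced by quantities depending on $a$ and $b$. The ``backwards'' map is now $R^{-1}(x, y, z) = (z - ay - bx,\ x,\ y)$, and the $R$-invariance of $P_{ab}$ makes it send solutions to solutions. If a positive increasing solution $0 < x < y < z$ satisfies $0 < z - ay - bx < x$, then its image under $R^{-1}$ is again a positive increasing solution, and its largest coordinate, $y$, is strictly smaller than $z$. So it suffices to produce a bound $Z_0 = Z_0(a, b)$ with the property that \emph{every} positive increasing solution with $z > Z_0$ satisfies those two inequalities. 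Granting such a $Z_0$, the map $R^{-1}$ drives any positive increasing solution, in finitely many steps, down to a positive increasing solution with largest coordinate at most $Z_0$; there are only finitely many such solutions, and the original one is recovered from one of them by repeatedly applying the forward iteration \eqref{3-rec}.

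To find $Z_0$ I would dehomogenize exactly as in Proposition~\ref{trib-inc}: dividing $P_{ab}(x, y, z) = 1$ by $z^3$ and setting $(t, s) = (x/z, y/z)$ produces $f_{ab}(s, t) = 1/z^3$ for an explicit cubic $f_{ab}$ on the unit square (the analogue of \eqref{trib-cubic}). In these coordinates $z - ay - bx > 0$ reads $as + bt < 1$ and $z - ay - bx < x$ reads $as + (b + 1)t > 1$, so the only ``bad'' positive increasing solutions are those for which $(s, t)$ lies in $A = \{\, 0 \le t \le s \le 1,\ as + bt \ge 1 \,\}$ or in $B = \{\, 0 \le t \le s \le 1,\ as + (b + 1)t \le 1 \,\}$. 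If $\mu_A = \min_A f_{ab}$ and $\mu_B = \min_B f_{ab}$ are both positive, then a bad solution would force $1/z^3 = f_{ab}(s, t) \ge \min(\mu_A, \mu_B)$, hence $z \le \min(\mu_A, \mu_B)^{-1/3}$, and we may take $Z_0$ to be this quantity.

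Everything then comes down to showing $\mu_A > 0$ and $\mu_B > 0$. The cubic $f_{ab}$ vanishes on the triangle at the attractor $(s^*, t^*) = (\alpha^{-1}, \alpha^{-2})$, where $\alpha$ is the dominant root of $X^3 - aX^2 - bX - 1$ (this is the point to which the ratios of consecutive recurrence terms converge, and there $f_{ab} = 1/z^3 \to 0$). Using $\alpha^3 = a\alpha^2 + b\alpha + 1$ one computes $a s^* + b t^* = 1 - \alpha^{-3}$ and $a s^* + (b + 1) t^* = 1 - \alpha^{-3} + \alpha^{-2}$; since $\alpha > 1$, the first is strictly less than $1$ and the second strictly greater than $1$, so this zero lies strictly outside both $A$ and $B$. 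Since $f_{ab}$ is positive at an interior sample point of each region (for instance $f_{ab}(1,1) = (a+b)^2 - 1 > 0$ and $f_{ab}(0,0) = 1 > 0$), the content of the proposition is that, when $a$ is large relative to $b$, $f_{ab}$ has no zero at all on the connected sets $A$ and $B$, so that $f_{ab} > 0$ there and $\mu_A, \mu_B > 0$ by compactness.

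That is the main obstacle. In Proposition~\ref{trib-inc} the matching minimizations over $A$ and $B$ were the ``routine computer-assisted calculus exercise'' with fixed numerical coefficients; here the coefficients of $f_{ab}$ carry the parameters $a$ and $b$, and one must keep the minima positive. The hypothesis ``$a$ sufficiently large relative to $b$'' is the lever that makes this tractable: $B$ collapses to a tiny triangle near the origin, $A$ retreats from the vanishing locus of $f_{ab}$, and on each region a direct (still computer-assisted) estimate — exploiting that, after writing $f_{ab} = (as - 1)^2 + (\text{rest})$, most monomials of $(\text{rest})$ carry a positive factor of $a$ while the negative ones ($-3ts$, $-bt$, $-bs^2$) do not — bounds $f_{ab}$ away from zero once $a$ dominates $b$. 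Carrying out this parametrized estimate cleanly is the one genuinely non-mechanical part; granting it, the descent of the first two paragraphs closes the proof just as in Proposition~\ref{trib-inc}.
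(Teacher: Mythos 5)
Your proposal has the same skeleton as the paper's proof: dehomogenize $P_{ab}(x,y,z)=1$ by $z^3$, pass to $(t,s)=(x/z,y/z)$, and reduce everything to showing that the cubic $f_{ab}$ is bounded away from zero on the region of the unit square where the backwards step $(x,y,z)\mapsto(z-ay-bx,x,y)$ would fail to produce a smaller positive increasing solution. Two differences are worth noting. First, your descent is in fact slightly cleaner than the paper's: you observe that $0 < z-ay-bx < x$ already forces the largest coordinate to drop from $z$ to $y$, whereas the paper measures size by the Euclidean norm and therefore needs the extra condition $ay+bx<2z$ (and correspondingly carries the set $\{as+bt\geq 2\}$ in its bad region $R_{ab}$ before discarding it). Your check that the zero $(\alpha^{-1},\alpha^{-2})$ of $f_{ab}$ lies strictly between the lines $as+bt=1$ and $as+(b+1)t=1$ is correct and is a sanity check the paper does not spell out. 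Second---and this is the substantive issue---the step you explicitly defer, namely that $\min f_{ab}>0$ on each bad region once $a$ is large relative to $b$, is the entire content of the hypothesis and hence of the proposition; your sketch via $f_{ab}=(as-1)^2+(\text{rest})$ is plausible but is not carried out, so as written the proof is incomplete exactly where the work lies. The paper closes this step by exploiting that $f_{ab}$ is a cubic: its interior critical points and the critical points of its restrictions to the boundary lines can be computed in closed form, and expanding them asymptotically as $a\to\infty$ (with $b$ fixed) shows the minimum over the bad region is attained on the line $1-as-bt=0$ and equals $a^{-6}-\frac{b^2}{4}a^{-7}+O(a^{-8})>0$, which converts into the explicit threshold \eqref{asy-bound} on $z$. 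To finish your version you would need either to execute your parametrized positivity estimate or to perform this critical-point asymptotic analysis.
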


It should be noted that while the following proof is non-constructive, the
\emph{method} is not. Carrying out the proof for any \emph{specific} integers
$a$ and $b$ will determine an exact bound under which the finitely many initial
conditions can be found.

\begin{proof}
    The polynomial $P_{ab}(x, y, z)$ is invariant under the map
    \begin{equation}
        \label{backwards-gen}
        (x, y, z) \mapsto (z - ay - bx, x, y),
    \end{equation}
    so it takes solutions to solutions. Note that
    \begin{align*}
        |(z - ay - bx, x, y)|^2 - |(x, y, z)|^2
        =
        (ay + bx)(ay + bx - 2z) < 0
    \end{align*}
    provided that the conditions $x, y \geq 0$ and
    \begin{equation}
        \label{gen-condition}
        \quad ay + bx - 2z < 0
    \end{equation}
    are satisfied, meaning that solutions are taken to smaller solutions in
    this case. We will show that, given an increasing solution $0 \leq x < y <
    z$ with sufficiently large $z$, condition \eqref{gen-condition} holds, and
    that the new solution determined by \eqref{backwards-gen} is also
    nonnegative, increasing and satisfies \eqref{gen-condition}.

    If we divide both sides of $P_{ab}(x, y, z) = 1$ by $z^3$ and make the
    change of variables $(t, s) = (x / z, y / z)$, then we obtain $f_{ab}(t, s)
    = z^{-3}$ where $f_{ab}$ is a cubic in $t$ and $s$ on the unit square.
    The region that we
    wish to avoid is
    \begin{equation*}
        R_{ab} = \{as + bt \geq 2 \} \cup \{1 - as - bt \leq 0\} \cup \{1 - as - bt \geq t\},
    \end{equation*}
    where we implicitly are restricting everything to the unit square. Note
    that the first set in the union is contained in third set if $b \geq 1$, so
    our region is really just
    \begin{equation*}
        R_{ab} = \{1 - as - bt \leq 0\} \cup \{1 - as - bt \geq t\}.
    \end{equation*}
    Because $f_{ab}$ is a cubic, it is possible to exactly compute its critical
    points on the unit square, as well as the critical points of boundary
    functions such as $f_{ab}(0, s)$ and $f_{ab}(1, s)$. If we treat $b$ as a
    constant and perform asymptotic expansions as $a \to \infty$ of these
    critical points, it turns out that the minimum of $f_{ab}$ on the region
    $R_{ab}$ occurs on the line $1 - as - bt = 0$, and it equals
    \begin{equation*}
        \frac{1}{a^6} - \frac{b^2}{4 a^7} - \frac{9b}{2a^8} + O(a^{-9}).
    \end{equation*}
    So $f_{ab}(t, s) = z^{-3}$ fails if
    \begin{equation}
        \label{asy-bound}
        z > a^2 + \frac{b^2}{12} a + \frac{3b}{2} + \frac{b^4}{72} + O(a^{-1}).
    \end{equation}
    It follows that the inequalities
    \begin{align*}
        0 < 1 - as - bt < t &\iff 0 < z - ay - bx < x \\
        as + bt < 2 &\iff ay + bx < 2z
    \end{align*}
    hold for any solution $0 < x < y < z$ with sufficiently large $z$. We may
    therefore iterate \eqref{backwards-gen} on such a solution until we reach
    one where $z$ is below the bound implied by \eqref{asy-bound}, and there
    are only finitely many of these.
\end{proof}

\begin{theorem}
    \label{gen-soln}
    Let $a$ and $b$ be positive integers such that
    \begin{enumerate}
        \item $X^3 - a X^2 - bX - 1$ is irreducible over the rationals and has
            a single largest root which is real and greater than $1$; and
        \item $a$ is sufficiently large relative to $b$ (in the
            non-constructive sense of proposition~\ref{gen-reduction}).
    \end{enumerate}
    Then all integer solutions to $P_{ab}(x, y, z) = 1$ are generated by
    applying \eqref{3-rec} forwards or backwards to finitely many initial
    solutions.
\end{theorem}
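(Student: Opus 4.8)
The plan is to follow the proof of Theorem~\ref{trib-all} verbatim: simply chain Propositions~\ref{gen-reduction} and~\ref{gen-inc} together. Hypothesis~(1) of the theorem is exactly the hypothesis of Proposition~\ref{gen-reduction}, so starting from any integer solution $(x, y, z)$ of $P_{ab}(x, y, z) = 1$ we may apply the backwards map $(x, y, z) \mapsto (z - ay - bx, x, y)$ or its inverse until we reach a positive, increasing solution. Hypotheses~(1) and~(2) together are exactly the hypotheses of Proposition~\ref{gen-inc}, so from any positive increasing solution we may iterate \eqref{backwards-gen} to drive the largest coordinate below the threshold implied by \eqref{asy-bound}, which leaves only finitely many possibilities.

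The observation that makes these two reductions interlock cleanly is that the forward step of the recurrence~\eqref{3-rec}, written on triples as $(x, y, z) \mapsto (y, z, x + by + az)$, is precisely the inverse of the backwards map~\eqref{backwards-gen}. Consequently ``applying \eqref{3-rec} forwards or backwards'' and ``applying \eqref{backwards-gen} or its inverse'' trace out the same orbits, and composing the two reductions exhibits every integer solution of $P_{ab} = 1$ as a forward or backward iterate, under \eqref{3-rec}, of a member of the finite set of solutions whose largest coordinate lies below the bound in \eqref{asy-bound}. Taking that finite set to be the ``finitely many initial solutions'' of the statement finishes the argument. (For any explicit $a, b$ one would then compute the bound, enumerate the finite set, and --- as in the Tribonacci case --- expect it to collapse onto the single orbit of the base solution, so that the integer solutions are exactly consecutive triples of \eqref{3-rec}; but the theorem only claims finiteness.)

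The only point requiring a little care is reconciling the boundary conventions of the two propositions: Proposition~\ref{gen-reduction} lands us on a \emph{positive}, increasing solution, yet the reduction in the Tribonacci model can terminate at a solution with a zero coordinate (the analogue of $(0, 0, 1)$), and Proposition~\ref{gen-inc} is stated for strictly positive, strictly increasing triples. I would dispatch this by noting that any solution with a vanishing coordinate is either already below the bound of \eqref{asy-bound}, hence in the finite set, or becomes strictly positive and strictly increasing after one forward step of \eqref{3-rec}; the fact that $P_{ab}$ is nonpositive on the closed negative orthant (already used in Proposition~\ref{gen-reduction}) rules out the truly degenerate cases. I would also flag, for honesty, that hypothesis~(2) is an existence claim rather than an explicit threshold, so ``finitely many'' in the conclusion is correspondingly non-effective until $a$ and $b$ are fixed. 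In short, there is no analytic obstacle here beyond the cubic minimization over $R_{ab}$ already carried out in Proposition~\ref{gen-inc}; the remaining work is purely organizational.
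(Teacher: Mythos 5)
Your proposal is correct and matches the paper's (implicit) argument exactly: the paper gives no separate proof for Theorem~\ref{gen-soln}, intending it to follow by chaining Propositions~\ref{gen-reduction} and~\ref{gen-inc} just as Theorem~\ref{trib-all} follows from the two Tribonacci propositions. Your extra remarks --- that the forward step of \eqref{3-rec} on triples is the inverse of \eqref{backwards-gen}, and the handling of solutions with a zero coordinate --- are sensible housekeeping that the paper leaves unstated.
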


Note that the first condition is not very restrictive. The cubic $X^3 - a X^2 -
bX - 1$ has a rational root (by the rational root test) only if $b = a + 2$.

\section{Examples}

\paragraph{A single family} The characteristic equation
\begin{equation*}
    X^3 - 10 X^2 - 3 X - 1
\end{equation*}
leads to the diophantine equation
\begin{equation*}
    x^3+6x^2y+10x^2z+19xy^2+27xyz-3xz^2+31y^3+97y^2z-20yz^2+z^3 = 1.
\end{equation*}
Theorem~\ref{gen-soln} (along with explicit arguments from
Proposition~\ref{gen-inc}) shows that all solutions to this equation are
generated by applying the maps $(x, y, z) \mapsto (y, z, 10z + 3y + x)$ and $(x,
y, z) \mapsto (z - 10y - 3x, x, y)$ to the
initial solution $(0, 0, 1)$.

\paragraph{Multiple families} The characteristic equation
\begin{equation*}
    X^3 - 2 X^2 - 3 X - 1
\end{equation*}
leads to the diophantine equation
\begin{equation*}
    x^3+6x^2y+2x^2z+11xy^2+3xyz-3xz^2+7y^3+y^2z-4yz^2+z^3 = 1.
\end{equation*}
Theorem~\ref{gen-soln} (along with explicit arguments from
Proposition~\ref{gen-inc}) shows that all solutions to this equation are
generated by applying the maps $(x, y, z) \mapsto (y, z, 3z + 2y + x)$ and $(x,
y, z) \mapsto (z - 3y - 2x, x, y)$ to the
initial solutions
\begin{equation*}
    (0, 0, 1), (0, 1, 3), (0, 2, 7), (1, 1, 4),
\end{equation*}
none of which can be obtained from any other.

See Figure~\ref{plane-map-other} for a visual demonstration of these maps.

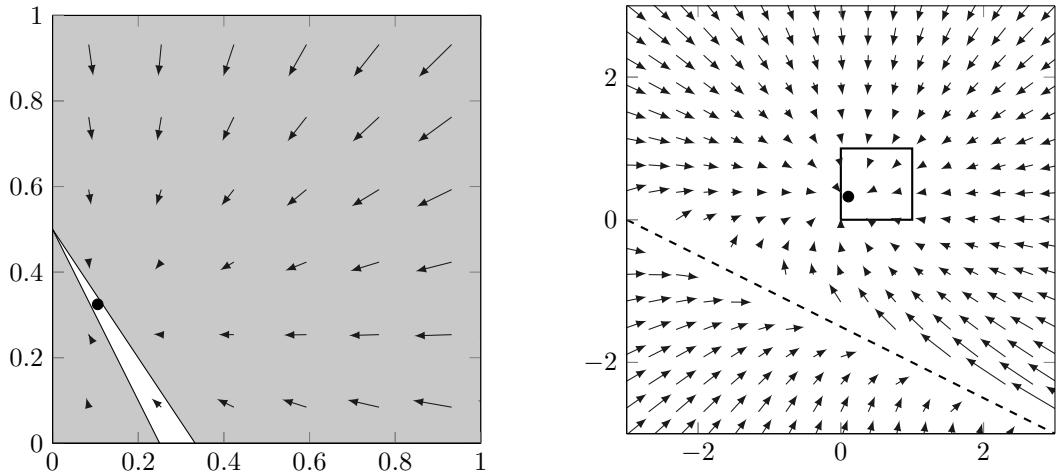
\begin{figure}[htpb]
    \begin{minipage}{0.5\textwidth}
        \begin{tikzpicture}[]
            \begin{axis}[
                xmin=0, xmax=1,
                ymin=0, ymax=1,
                zmin=0, zmax=1,
                view={0}{90},
                axis equal image]
            \draw(0,0)--(1, 0)--(1,1)--(0,1)--(0,0);
            \draw[fill=gray!42](0,0.5)--(0.333,0)--(1,0)--(1,1)--(0,1);
            \draw[fill=gray!42](0,0.5)--(0.25,0)--(0,0)--(0,0.5);
            \filldraw (0.1054, 0.3247) circle (2pt);
            \addplot3[black!90, quiver={u=y/(2 * y + x + 3)-x, v=1/(2 * y + x + 3)-y, scale arrows=0.10}, samples=60, -latex] (x, y, 0);
            \end{axis}
        \end{tikzpicture}
    \end{minipage}
    \begin{minipage}{0.5\textwidth}
        \begin{tikzpicture}[]
            \begin{axis}[
                xmin=-3, xmax=3,
                ymin=-3, ymax=3,
                view={0}{90},
                axis equal image]
                \draw [thick] (0, 0)--(1, 0)--(1, 1)--(0, 1)--(0, 0);
                \draw [dashed, thick] (-3, 0)--(3,-3);
            \filldraw (0.1054, 0.3247) circle (2pt);
            \addplot3[
                black!90,
                quiver={u=y/(2*y + x + 3)-x, v=1/(2*y + x + 3)-y, scale arrows=0.10},
                samples=27,
                x filter/.expression={abs(2 * y + x + 3) < 0.5 ? nan : x},
                -latex] (x, y, 0);
            \end{axis}
        \end{tikzpicture}
    \end{minipage}
    \caption[Tribonacci iteration map transformed to the plane]{The map $(x, y,
        z) \mapsto (y, z, 3z + 2y + x)$ represented in the $ts$ plane by its
        equivalent $(t, s) \mapsto (s / (2s + t + 3), 1 / (2 s + t +
    3))$. Left: The map restricted to the unit square, with the region $\{2s +
    3t \geq 1\} \cup \{2s + 4t \leq 1\}$ shaded. The unique critical point
    in the first quadrant of the cubic constructed in the proof of
    Proposition~\ref{gen-inc} with parameters $(a, b) = (2, 3)$ is labeled
by a black dot. Right: The map on a larger portion of the plane.}
\label{plane-map-other}
\end{figure}

\paragraph{A failure} The characteristic equation
\begin{equation*}
    X^3 - X^2 - 3X - 1 = (X - 1)(X^2 - 2X - 1)
\end{equation*}
corresponds to setting $a = 1$ and $b = 3$, which leads to the diophantine
equation
\begin{equation*}
    x^3+6x^2y+x^2z+10xy^2-3xz^2+4y^3-2y^2z-2yz^2+z^3 = 1.
\end{equation*}
Our method fails here on two counts. First, the proof of Proposition~\ref{gen-inc}
does not go through ($a = 1$ is not big enough relative to $b = 3$). Second,
this recurrence has degenerate integer solutions like $(-1)^n$ which do not
have the correct asymptotics.

\section{Computer implementations}

As mentioned before the proof of Proposition~\ref{gen-inc}, most of our arguments
here can be made effective for any set of fixed parameters. The authors have
written a Maple package, \texttt{Hilbert10.txt}, available at \url{https://sites.math.rutgers.edu/~zeilberg/mamarim/mamarimhtml/hilbert10.html}. Execute the command \texttt{ezra();} to receive a help display.

To execute the proof of Proposition~\ref{gen-inc} on a specific recurrence, use
the command \texttt{allSolns([a, b, c])}. The procedure requires a list of
length three with $c = 1$. This is slightly awkward, but we leave it as is to
suggest the challenge of generalizing it.

The following example computes the Diophantine equation induced by the
Tribonacci recurrence and finds all of its solutions:
\begin{verbatim}
> allSolns([1, 1, 1]);
findAbsoluteMin:   making a floating point guess:   -2. <= 0
allSolns:   looking for monotonically increasing solutions up to   5
                   {[0, 0, 1]}
\end{verbatim}
The output states that all solutions are generated by applying the Tribonacci
recurrence to the initial solution $(0, 0, 1)$. Note that \texttt{allSolns}
searches (without loss of generality) for \emph{monotonically} increasing
solutions.

The following example does the same thing for a different third-order
recurrence:
\begin{verbatim}
> allSolns([5, 3, 1]);
findAbsoluteMin:   making a floating point guess:   -7.333333333 <= 0
allSolns:   looking for monotonically increasing solutions up to   36
                   {[0, 0, 1]}

\end{verbatim}

And finally, an example with several generating initial conditions:
\begin{verbatim}
> allSolns([2, 3, 1]);
findAbsoluteMin:   making a floating point guess:   -2. <= 0
allSolns:   looking for monotonically increasing solutions up to   17
           {[0, 0, 1], [0, 1, 3], [0, 2, 5], [1, 1, 4]}

\end{verbatim}

The package also includes a verbose ``proof printer,'' \texttt{verboseProof},
which fills in the details of Proposition~\ref{gen-inc} for specific
parameters.

\begin{verbatim}
> verboseProof([2, 3, 1]);
findAbsoluteMin:   making a floating point guess:   -2. <= 0

THEOREM. The nonnegative, increasing solutions to the
    diophantine equation

 3      2        2           2                  2      3
x  + 6 x  y + 2 x  z + 11 x y  + 3 x y z - 3 x z  + 7 y

        2          2    3
     + y  z - 4 y z  + z  = 1

          are generated by applying the recurrence

                         [2, 3, 1]

            to finitely many initial solutions.

  PROOF. Let P be the polynomial P on the left-hand side.

      Note that it is invariant under the recurrence:

                     P - P(shift) is, 0

The backwards shift formula to get the previous term
    from the triple (x, y, z) is

                       z - 3 x - 2 y

We will show that this backwards shift gives a smaller
    increasing solution for sufficiently large z.

                                       3
Divide both sides of our equation by, z ,

    and make the change of variables {t = x / z, s = y / z}

                         This gives

   3       2          2    3    2              2
7 s  + 11 s  t + 6 s t  + t  + s  + 3 s t + 2 t  - 4 s

                  1
     - 3 t + 1 = ----
                   3
                  z

            where (t, s) is in the unit square.

Let (x, y, z) be a generic solution. Then the following
    inequalities are routine calculus exercises.

the inequality, 0 < z - 3 x - 2 y, also known as,

    0 < -2 s - 3 t + 1, holds for

                          1
              -------------------------- <= z
              /                1/2\(1/3)
              |50371   1718 859   |
              |----- - -----------|
              \81675      81675   /

                    more explicitly for

                      16.36065936 <= z

the inequality, z - 3 x - 2 y < x, also known as,

    -2 s - 3 t + 1 < t, holds for

                           1
                ----------------------- <= z
                /             1/2\(1/3)
                |161   53 1219   |
                |--- - ----------|
                \216      2484   /

                    more explicitly for

                      13.33123227 <= z

  We only need to look for solutions with, z < 16.36065936

           and there are finitely many of these.

                           Q.E.D.
\end{verbatim}

\section*{Acknowledgements}
Many thanks are due to Yuri Matiyasevich for telling us
about the deep work of Maxim Vsemirnov.

\printbibliography[heading=bibintoc]

\end{document}